\newcommand{\set}[1]{\ensuremath{\left\{#1\right\}}}
\newcommand{\setp}[1]{\ensuremath{\left(#1\right)}}
\newcommand{\EN}{\ensuremath{\set{1,2,\dotsc,n}}}
\newcommand{\undX}{\underline{X}}
\newtheorem{lemma}{Lemma}
\newcommand{\qtilde}{{\tilde q}}
\newcommand{\sumn}{\sum_{k=1}^n}
\newcommand{\probaK}{\binom{n}{k} (1-x)^{k} x^{n-k}}
\newcommand{\upc}[1]{{\MakeUppercase #1}}
\title{Weber's optimal stopping problem and generalizations}
\author{Rémi Dendievel\\Université Libre de Bruxelles}
\address{Université Libre de Bruxelles\\
Campus Plaine, CP 210\\
B-1050 Bruxelles\\
Belgique}
\begin{document}


\keywords{Optimal prediction, Bruss' stopping problem, odds-algorithm,
  algorithmic efficiency, last hitting time, monotone stopping
  problem, Bruss-Weber problem} \subjclass{60G40,..}

\maketitle

\begin{abstract}
  One way to interpret the classical secretary problem (CSP) is to
consider it as a special case of the following problem. We observe $n$
independent indicator variables $I_1,I_2,\dotsc,I_n$ sequentially and
we try to stop on the last variable being equal to 1. If $I_k=1$ it
means that the $k$-th observed secretary has smaller rank than all
previous ones (and therefore is a better secretary). In the CSP
$p_k=E(I_k)=1/k$ and the last $k$ with $I_k=1$ stands for the best
candidate. The more general problem of stopping on a last ``1'' was
studied by Bruss~(2000). In what we will call Weber's problem the
variables $I_k$ can take more than two values and we try to stop on
the last occurence of \textit{one} of these values. Notice that we do
not know in advance the value taken by the variable on which we stop.

We can solve this problem in some cases and provide algorithms to
compute the optimal stopping rule. These cases carry enough generality
to be applicable in concrete situations.



\end{abstract}

\section{Statement of the type of Problems}
\label{sec:formalization}

The following problem has been proposed in 2013  by Weber (R.\,R.~Weber,
University of Cambridge) to his students.

\subsection{Problem 1 (Weber's problem)}
\label{sec:problem-1-webers}
\begin{quote}\em
  A financial advisor tries to impress his clients if immediately
  following a week in which the \textsc{ftse} index moves by more than
  5\% in some direction he correctly predicts that this is the last
  week during the calendar year that it moves more than 5\% in
  \emph{that} direction.

  Suppose that in each week the change in the index is independently
  up by at least 5\%, down by at least 5\% or neither of these, with
  probabilities $p$, $p$ and $1-2p$ respectively ($p\le 1/2$). He
  makes at most one prediction this year. With what strategy does he
  maximize the probability of impressing his clients?
\end{quote}

The solution by backwards induction is relatively straightforward and
several students of Weber's found the solution for this specific
problem. Weber then discussed with Bruss (private communication
2013) several modifications of this problem. The objective of the
present paper is to present the solutions of two Bruss-Weber
modifications which have an appeal for applications.

We present two modifications of Problem~1. We code a ``1'' if the
index goes up by some fixed percentage some day, ``$-1$'' if it goes
down by some (other) fixed percentage and ``0'' else. Two other ways
of generalization come to our mind. One can imagine that the
probabilities of a ``$1$'' and of a ``$-1$'' are different, we then
have two parameters $p$ and $p'$.  One can also imagine that the
probabilities of a ``1'' or ``$-1$'' are equal but are allowed to
differ day after day. We then have $n$ parameters $(p_k,k=1,\dots,n)$,
thus also generalizing Bruss' problem of stopping on a alast specific
success. This also opens the way to tackle continuous time problems
with a random number of decision items (see Bruss (2000)). In this
paper we confine our interest to the discrete time setting, however.

The integer $n$ is always known and represents the number of
observations of the index made over the time horizon. In Weber's
problem, the model is as follows.  We call $n$ the length of the
horizon. Let $X_1,X_2,\dotsc,X_n$
($n$ known) be i.i.d. random variables, such that
\begin{displaymath}
  P(X_i=1)=P(X_i=-1)=p \quad\text{and}\quad P(X_i=0)=1-2p.
\end{displaymath}
Let $(\mathcal{F}_k)_{k=1,2,\dotsc}$ be the filtration generated by
$X_1,X_2,\dotsc,X_k$. We want to find, among all random times $\tau$,
the stopping time $\tau^\star$ maximizing the quantity
\begin{equation}\label{eq:4}
  E\left(
    \mathbbm{1}\biggl[ 
    \biggl( 
    \set{X_\tau=1}\cap\bigcap_{i=\tau+1}^n\set{X_i\neq
      1}
    \biggr)
    \cup
    \biggl( 
    \set{X_\tau=-1}\cap\bigcap_{i=\tau+1}^n\set{X_i\neq -1}
    \biggr)
    \biggr] 
  \right).
\end{equation}

\noindent\textbf{Remark.} Note that stopping on a ``$1$'' or a
``$-1$'' may be a stopping time but stopping on a last ``1'' or ``-1''
is in general not a stopping time. Adding the conditional knowledge
$\mathcal F_k$ in the expectation we obtain a stopping time. However
this can be dropped because of the markovian nature of the
problem. Indeed, all that counts at a given time $k$ is the value of
$I_k$ and the number of remaining variables. The knowledge of the
history therefore does not influence our decision at any given time.

We now describe quickly the two modifications of Problem~1.

\subsection{Problem~2}
\label{sec:problem-2}

The difference between this problem and Problem~1 is that here
the probability of a ``$1$'' is different from that of a ``$-1$''.
This problem is therefore described by three parameters: $n$, $p$ and
$p'$ which are the number of variables, the probability of a
variable equal to $1$ and the probability a variable equal to $-1$,
respectively.

\subsection{Problem~3}
\label{sec:problem-3}

Another interesting modification is to look at the problem of a ``same
$p$'' for $+1$'s and $-1$'s but one that is changing over
time. Formally, the problem is described by the parameters $n$ and
$p_1,p_2,\dotsc,p_n$ where $p_k=P(X_k=1)=P(X_k=-1)$.

We state them and we provide optimal decision rules. Our special
interest is to make these solutions as quick and concise as possible.

\section{Solution of Problem 2}
\label{sec:problem-2}

We are looking at the case where the probabilities of a ``1'' or a
``$-1$'' are not necessarily equal. The variables $X_1,X_2,\dotsc,X_n$
are i.i.d with $p:=P(X_1=1)$, $p':=P(X_1=-1)$ and $P(X_1=0)=1-p-p'$,
$p,p'\in[0,1]$. Without loss of generality we will suppose that $p\ge
p'$. We must suppose that $p+p'\le 1$. If $p+p'=1$ the problem is
trivial since it suffices to stop on the last event. If $p'=0$, Bruss'
odds-theorem and the accompanying algorithm gives the optimal
strategy. Therefore we will suppose that $p,p'\in (0,1)$ and, to avoid
trivial cases, that $p+p'<1$. The notations $q=1-p$, $q'=1-p'$ and
$\qtilde = 1-p-p'$ will be used thoughout the article.

\subsection{Monotonicity and unimodality}
\label{sec:monotonicity}

In this section we state and prove several lemmas that will be the
basis of the solving algorithm.

We will show that the problem is monotone, that is, if at a certain
time index it is optimal to stop on a ``1'' (respectively on a
``$-1$''), then it is optimal to stop on a ``1'' (respectively on a
``$-1$'') at any later time index. Assaf and Samuel-Cahn (2000) called
such stopping rules ``simple''.

\begin{lemma}
  \label{sec:monotonicity-1}
  The problem described in Problem~2 is monotone.
\end{lemma}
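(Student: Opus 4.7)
The plan is a standard backward-induction argument built on the Markovian structure already noted in the remark following~\eqref{eq:4}. Let $V_k$ denote the optimal winning probability when the problem is ``restarted fresh'' at time $k$, i.e.\ the supremum over stopping times $\tau\in\{k,k+1,\dots,n\}$ of the reward in~\eqref{eq:4}. Since the indicator in~\eqref{eq:4} depends only on $X_\tau$ and on the $X_i$ with $i>\tau$, and the $X_i$ are i.i.d., the continuation value at time $k$ (i.e.\ the value of not stopping at $k$) equals $V_{k+1}$ regardless of the realized value of $X_k$. Meanwhile, the value of stopping at time $k$ is $q^{n-k}$ if $X_k=1$, $(q')^{n-k}$ if $X_k=-1$, and $0$ if $X_k=0$. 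Consequently stopping on a ``$1$'' is optimal at time $k$ iff $q^{n-k}\ge V_{k+1}$, and stopping on a ``$-1$'' is optimal at time $k$ iff $(q')^{n-k}\ge V_{k+1}$.

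The key auxiliary fact I would establish is that $k\mapsto V_k$ is non-increasing. This is a one-line ``skip'' argument: any stopping rule $\sigma$ admissible for the problem started at $k+1$ induces a rule for the problem started at $k$ by simply observing and discarding $X_k$ and then applying $\sigma$; since the reward in~\eqref{eq:4} does not involve $X_k$, this rule achieves winning probability $V_{k+1}$, whence $V_k\ge V_{k+1}$.

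Putting the two observations together closes the lemma. Suppose stopping on a ``$1$'' is optimal at time $k$, i.e.\ $q^{n-k}\ge V_{k+1}$. Then since $q\in(0,1)$ and $V$ is non-increasing,
\[
q^{n-k-1}\;\ge\;q^{n-k}\;\ge\;V_{k+1}\;\ge\;V_{k+2},
\]
so stopping on a ``$1$'' is again optimal at time $k+1$; iterating this step propagates the conclusion to every later index. The argument for ``$-1$'' is identical with $q'$ in place of $q$.

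The only step requiring genuine care, rather than symbol-pushing, is the identification of the continuation value with $V_{k+1}$: one must verify that the conditional distribution of the future reward, given the history up to $k$, actually depends only on $n-k$, which is precisely the Markov/independence point flagged after~\eqref{eq:4}. Once that identity is written out cleanly, the chain of inequalities above and the skip argument for the monotonicity of $V$ finish the proof in a few lines.
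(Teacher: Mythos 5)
Your proof is correct and takes essentially the same route as the paper: both arguments rest on the stopping reward $q^{n-k}$ being non-decreasing in $k$ while the optimal value of continuing is non-increasing in $k$, so that once stopping on a ``1'' (resp.\ ``$-1$'') beats continuing it does so ever after. The only cosmetic difference is that you justify the monotonicity of the value sequence by a strategy-inclusion (skip) argument, whereas the paper reads it off the dynamic-programming identity $v_{n-k}=p\,(v_{n-k-1}\vee q^{n-k-1})+p'\,(v_{n-k-1}\vee (q')^{n-k-1})+(1-p-p')\,v_{n-k-1}$; both are valid.
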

\begin{proof}
  Let $v_j$ be the optimal probability of a win when there are still
  $j$ variables to observe. It is optimal to stop at stage
  $k\in\set{1,2,\dots,n}$ if $X_k=1$ and
  \begin{equation}
    \label{eq:1}
    q^{n-k}>v_{n-k},
  \end{equation}
  by definition of $v_{n-k}$. We now show that \eqref{eq:1} implies
  $q^{n-k-1}>v_{n-k-1}$.  From independence and the optimality
  principle we have \[v_{n-k} = p(v_{n-k-1}\vee q^{n-k-1}) +
  p'(v_{n-k-1}\vee (q')^{n-k-1}) + (1-p-p') v_{n-k-1},\] where $a\vee
  b$ denotes the maximum of $a$ and $b$. Also,
  $\max\{v_{n-k-1},q^{n-k-1},(q')^{n-k-1}\} \ge v_{n-k-1}$, and thus
  by inequality~(\ref{eq:1}) we obtain
  \begin{align*}
    \label{eq:3}
    q^{n-k-1} &> \bigl(p v_{n-k-1} + p' v_{n-k-1} + (1-p-p') v_{n-k-1}\bigr) q^{-1} \\
    &> v_{n-k-1}, \qquad 0 \le k \le n-1.
  \end{align*}
  This shows monotonicity with respect to the stopping problem of the
  ``1'''s. An analoguous argument proves monotonicity for the value
  $-1$. Hence the lemma is proved.
\end{proof}

Since $p\ge p'$, we expect a different behaviour regarding the 1's and
the $-1$'s, and this will become apparent in what follows. For
$j,k\in\EN$, we use the following notations
\begin{align*}
  \tau^{+}_j &= \inf\set{j\le m\le n : X_m=1} \wedge n, \\
  \tau^{-}_k &= \inf\set{k\le m\le n : X_m=-1} \wedge n,\\
  \tau_{j,k} &= \tau^{+}_j \wedge \tau^{-}_k,
\end{align*}
with $x\wedge y=\min\set{x,y}$ and the usual convention that $\inf
\varnothing = +\infty$.  The monotonicity of the problem implies that
there are two indexes $s$ and $s'$ such that the stopping time
$\tau_{s,s'}$ is optimal, that is, maximizes~(\ref{eq:4}).

Consider a stopping region (or stopping set) as follows
\begin{equation}
  \label{eq:6}
  R = \set{(k,1) : k\in J^+} \cup \set{(k,-1) : k\in J^-} \cup \set{(n,0)}
\end{equation}
where $J^+$ and $J^-$ are subsets of $\EN$. A stopping time for this
problem can be defined as the first $k\in \EN$ such that $(k, X_k)\in
R$, that is, the first hitting time of the set $R$ of the process
$((k, X_k))_{1\le k\le n}$. This hitting time will be denoted
$\tau_{R}$. For any stopping region $R$, we let $V_{R}$ be the
probability of winning if our strategy is to we use the first hitting
time of the set $R$.

Let $R^\star$ denote the optimal stopping region, that is,
\begin{equation}
  \label{eq:7}
  R^\star = \set{(k,1) : k\ge s} \cup \set{(k,-1) : k\ge s'}\cup\set{(n,0)}.
\end{equation}
By definition, the optimal value of the problem is
$V:=V_{R^\star}$. We now have two lemmas:

\begin{lemma}
  If $R_1 \subseteq R_2 \subseteq R^\star$, then $V_{R_1}\le V_{R_2}$.
\end{lemma}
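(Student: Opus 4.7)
The plan is to proceed by induction on the cardinality of $R_2\setminus R_1$, which reduces the statement to the case where $R_2=R_1\cup\set{(k,x)}$ for a single added point $(k,x)\in R^\star\setminus R_1$ with $x\in\set{1,-1}$. (Adjoining the dummy element $(n,0)$ never changes the winning probability, since stopping on a ``$0$'' is automatically a loss.) Chaining such single-point additions will then yield the general statement.

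I would then couple $\tau_{R_1}$ and $\tau_{R_2}$ on a common realization $(X_1,\dotsc,X_n)$. The two stopping times coincide outside the event
\begin{equation*}
  A = \set{\tau_{R_1}>k}\cap\set{X_k=x},
\end{equation*}
because on its complement either a common earlier point of $R_1$ is hit first, or $(k,X_k)$ belongs neither to $R_1$ nor to $R_2\setminus R_1$ and both rules continue identically. On $A$ one has $\tau_{R_2}=k$ while $\tau_{R_1}>k$, so
\begin{equation*}
  V_{R_2}-V_{R_1} = P(A)\bigl(P(\text{win}\mid A;\,R_2)-P(\text{win}\mid A;\,R_1)\bigr),
\end{equation*}
and the task reduces to showing that the bracket is non-negative.

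For the first conditional probability, stopping at time $k$ with $X_k=x$ wins exactly when no later observation equals $x$, so by independence it equals $q^{n-k}$ if $x=1$ and $(q')^{n-k}$ if $x=-1$. For the second, on $A$ the rule $R_1$ continues past $k$ using a stopping region which, by the Markov property and the independence of $X_{k+1},\dotsc,X_n$ from the past, is just some admissible rule for a fresh Weber problem of horizon $n-k$; so this conditional probability is at most $v_{n-k}$. Finally, the condition $(k,x)\in R^\star$ is precisely the statement that stopping is optimal at that position, which by Lemma~\ref{sec:monotonicity-1} gives $q^{n-k}\ge v_{n-k}$ (respectively $(q')^{n-k}\ge v_{n-k}$). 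Combining these three facts gives $V_{R_2}\ge V_{R_1}$.

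The step requiring the most care is the identification of the continuation value under $R_1$ with a Weber problem of shorter horizon: one must observe that the winning event ``$X_i\ne X_\tau$ for $\tau<i\le n$'' depends only on $X_{k+1},\dotsc,X_n$ whenever $\tau>k$, so that the restriction of $R_1$ to indices $>k$ genuinely acts on an independent i.i.d.\ block of length $n-k$ and is bounded by $v_{n-k}$. Once this Markov-style reduction is in place the rest is a clean comparison of three numbers.
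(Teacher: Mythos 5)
Your proof is correct and rests on the same idea as the paper's: couple the two rules on a common realization, observe that when they disagree $\tau_{R_2}$ stops earlier at a point of $R^\star$, and use that at such a point the immediate-stopping value $q^{n-k}$ (resp.\ $(q')^{n-k}$) dominates $v_{n-k}$, hence any continuation under $R_1$. Your single-point induction and explicit conditioning on the event $A$ simply make the paper's rather terse coupling argument more precise; no substantive difference or gap.
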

\begin{proof} If $R_1=R_2$, there is nothing to prove. Let
  $R_1\neq R_2$ and $\underline X = (X_1,X_2,\dotsc,X_n)$ and suppose
  that $\tau_{R_2}(\undX)\neq \tau_{R_1}(\undX)$. We necessarily have
  $\tau_{R_2}(\undX)< \tau_{R_1}(\undX)$ because $R_1\subseteq R_2$,
  and it is impossible to hit $R_1$ before $R_2$. We also have $s\le
  \tau_{R^\star}(\undX) \le \tau_{R_2}(\undX)$. Let
  $t=\tau_{R_2}(\undX)$. We assume that $X_t=1$. Let $\underline
  X'=(0,0,\dotsc,0,1,X_{t+1}, \dotsc,X_n)$. Since $t$ is the first
  index $k$ after $s$ such that $X'_k=1$ it is optimal to stop
  $\underline X'$ at time $t$. We use the same argument if we stop on
  a $-1$. Therefore, the strategy based on $R_2$ does better than the
  strategy based on $R_1$.

  On the set $\set{\tau_{R_1} = \tau_{R_2}}$, the two strategies
  perform the same. Overall, the strategy based on $R_2$ is better.
\end{proof}

\begin{lemma}
  If $R^\star \subseteq R_1 \subseteq R_2$, then $V_{R_1} \ge V_{R_2}$.
\end{lemma}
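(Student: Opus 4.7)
The plan is to adapt the path-wise exchange argument of Lemma~2. If $R_1 = R_2$ there is nothing to prove; otherwise, fix a realization $\undX$ with $\tau_{R_1}(\undX) \neq \tau_{R_2}(\undX)$. Because $R_1 \subseteq R_2$ one cannot hit $R_1$ strictly before $R_2$, so $\tau_{R_2}(\undX) < \tau_{R_1}(\undX)$; in particular $t := \tau_{R_2}(\undX) < n$. The pair $(t, X_t)$ lies in $R_2 \setminus R_1$, and since $R^\star \subseteq R_1$ it lies outside $R^\star$. The stopping-region format~\eqref{eq:6} (where $(k,0)$ occurs only at $k=n$) together with $t < n$ forces $X_t \in \{1,-1\}$. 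I treat $X_t = 1$; the case $X_t = -1$ is symmetric. Since $(t,1)\notin R^\star$, monotonicity (Lemma~1) places $t$ below the optimal ``stop on $1$'' threshold, i.e.\ $t < s$, giving $q^{n-t} \leq v_{n-t}$.

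Conditioning on the prefix $(X_1,\ldots,X_{t-1})$ (which must satisfy $(j,X_j)\notin R_2$ for $j<t$) and on $X_t = 1$, the conditional win probability under $R_2$ is $q^{n-t}$, while under $R_1$ it equals $V_{R_1^{(>t)}}$, the value of the restriction $R_1^{(>t)}:=R_1\cap\{(j,\cdot):j>t\}$ acting on the i.i.d.\ subproblem $(X_{t+1},\ldots,X_n)$ of length $n-t$. Summing over divergence paths (and treating $X_t=-1$ analogously) yields
\begin{equation*}
V_{R_1} - V_{R_2} = \sum_{t}P(A_t^{+})\bigl[V_{R_1^{(>t)}}-q^{n-t}\bigr] + \sum_{t}P(A_t^{-})\bigl[V_{R_1^{(>t)}}-(q')^{n-t}\bigr],
\end{equation*}
where $A_t^{\pm}$ is the divergence event with $X_t = \pm 1$. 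The lemma is therefore reduced to the inequalities $V_{R_1^{(>t)}} \geq q^{n-t}$ for $t<s$ and the analogous $(q')^{n-t}$-bound for $t<s'$.

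This last inequality is the main obstacle. The i.i.d.\ structure together with Lemma~1 makes $R^\star \cap \{(j,\cdot):j>t\}$ exactly the optimal stopping region for the length-$(n-t)$ subproblem, with value $v_{n-t} \geq q^{n-t}$; however $R_1^{(>t)}$ in general strictly contains that region, and by the lemma itself $V_{R_1^{(>t)}} \leq v_{n-t}$ --- the wrong direction. To push the bound down to $q^{n-t}$ I would perform a joint strong induction on $n$, simultaneously establishing the lemma and the auxiliary statement ``$V_{R^{(>k)}} \geq q^{n-k}$ for every $R\supseteq R^\star$ and every $k<s$''. The inductive step conditions on the first subproblem observation $X_{k+1}$: when $(k+1,\pm 1)\notin R$ one recurses on a strictly shorter problem using the auxiliary bound at $k+1$, and when one or both of these stopping points is present one obtains the bound via a direct convex combination using $p+p'+\tilde q = 1$ together with the ordering $s\geq s'$. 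The delicate case --- which I expect to be the principal difficulty --- is when $k+1$ coincides with the threshold $s$, where the auxiliary bound is not available for the next step and one must combine the strict monotonicity inequality $q^{n-s}>v_{n-s}$ with a careful lower bound on the enlarged subproblem value to close the recursion.
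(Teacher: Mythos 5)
Your reduction is the right one, and you have put your finger on exactly the point where the paper's ``analogous'' argument breaks down: in Lemma~2 the divergence point $(t,X_t)$ lies in $R^\star$, so the stopping value $q^{n-t}$ (resp.\ $(q')^{n-t}$) dominates \emph{every} continuation value, in particular that of $R_1$; here the divergence point lies outside $R^\star$, so one only knows $q^{n-t}\le v_{n-t}$, which bounds the \emph{optimal} continuation, not the $R_1$-continuation. But your proposal stops at a plan: the joint induction is only sketched, and you explicitly leave the ``delicate case'' open, so as written this is not a proof. (The paper itself offers no argument either --- it simply declares the proof analogous and omits it.)

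More importantly, the plan cannot be completed at this level of generality, because the auxiliary inequality you need is true in its ``$+1$'' form but false in its ``$-1$'' form. The $+1$ form, $V_{R^{(>t)}}\ge q^{n-t}$ for every admissible $R\supseteq R^\star$ and $t<s$, does follow from your convex-combination induction: at each step all three branches contribute at least $q^{n-k}$ (stopping on a ``$-1$'' gives $(q')^{n-k}\ge q^{n-k}$ since $q'\ge q$, stopping on a ``$1$'' gives $q^{n-k}$, continuing gives at least $q^{n-k}$ by the induction hypothesis), and the base case $t=s-1$ is exactly $q^{n-s+1}\le v_{n-s+1}$, because for indices $\ge s\ (\ge s')$ every region of the form~\eqref{eq:6} containing $R^\star$ coincides with $R^\star$. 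The $-1$ form fails: take $n$ large, $p=0.1$, $p'=0.01$, and $R_1=R^\star\cup\set{(j,1)\st 1\le j<s}$, i.e.\ stop on every ``$1$''. From an early time $k<s'$ the $R_1$-continuation almost surely stops on a ``$1$'' that is not the last one; its value is at most of order $(n-k)p\,q^{n-k-1}+q^{s'-k}$, which is exponentially smaller than $(q')^{n-k}$. Hence $R_2:=R_1\cup\set{(1,-1)}$ satisfies $R^\star\subseteq R_1\subseteq R_2$ but $V_{R_2}>V_{R_1}$: the conclusion of the lemma itself is violated for general regions of the form~\eqref{eq:6}, so no refinement of the induction can close your ``delicate case''. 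Any correct statement must restrict the class of regions --- for instance to the two-threshold regions~\eqref{eq:7} actually visited by the algorithm, with the ``$1$''-threshold not below $s$ --- or otherwise exclude sub-threshold ``$1$''-points that spoil the continuation. So your diagnosis of where the difficulty lies is sound, but the proposal neither resolves it nor could it in the stated generality.
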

\begin{proof}
  The arguments for this proof are analogous to those of the proof of
  the previous lemma and can be omitted.
\end{proof}

Finally, the following lemma allows, as we will see, for an efficient
algorithm.
\begin{lemma}
  If $p\ge p'$, then $s \ge s'$.
\end{lemma}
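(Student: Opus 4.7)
My plan is to compare the two stopping thresholds via the characterization of optimal actions already established in the proof of Lemma~\ref{sec:monotonicity-1}. Write $v_j$ for the optimal probability of winning with $j$ variables left to observe. From that proof, stopping at stage $k$ upon observing $X_k=1$ is optimal precisely when $q^{n-k}>v_{n-k}$, and by the symmetric argument stopping on $X_k=-1$ is optimal precisely when $(q')^{n-k}>v_{n-k}$. By monotonicity, the set of $k$ for which the first inequality holds is $\{k \st k\ge s\}$, and the set of $k$ for which the second holds is $\{k \st k\ge s'\}$.

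The key observation is then the following monotone comparison: since $p\ge p'$, we have $q=1-p\le 1-p'=q'$, hence $q^{n-k}\le (q')^{n-k}$ for every $k\in\EN$. Consequently, for any $k$,
\[
  q^{n-k}>v_{n-k} \;\Longrightarrow\; (q')^{n-k}>v_{n-k}.
\]
In words, any time at which the optimal rule stops on a ``$1$'' is also a time at which it stops on a ``$-1$''.

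This inclusion, translated through the two thresholds, gives $\set{k \st k\ge s}\subseteq \set{k \st k\ge s'}$, which is the same as $s\ge s'$. I expect no real obstacle here: once Lemma~\ref{sec:monotonicity-1} is in hand and the thresholds $s,s'$ are defined via the same value function $v_{n-k}$ but compared to $q^{n-k}$ versus $(q')^{n-k}$, the conclusion is immediate from $q\le q'$. The only subtle point worth spelling out is that both thresholds are read off against the \emph{same} continuation value $v_{n-k}$, which is exactly why raising $q$ to $q'$ on the left-hand side preserves the inequality without any need to re-examine the dynamic programming recursion.
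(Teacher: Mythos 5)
Your argument is correct and is essentially the paper's own proof: both rest on the single observation that $p\ge p'$ gives $q\le q'$, so $(q')^{n-k}\ge q^{n-k}>v_{n-k}$ whenever stopping on a ``1'' is optimal, forcing $s'\le s$. The paper simply applies this comparison at the one index $k=s$ rather than phrasing it as an inclusion of stopping sets, so no substantive difference.
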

\begin{proof}
  We must show that stopping on $X_s=-1$ is the optimal choice. In
  fact, we have, with the same notation as above,
  \begin{displaymath}
    {(q')}^{n-s} \ge q^{n-s} > v_{n-s},
  \end{displaymath}
  and hence in particular $s\ge s'$, since $q'>q$.
\end{proof}

\subsection{Recurrence equations for  $w_{j,k}$}
\label{sec:few-recurr-relat}

In this section we write the recurrence equations of the success
probabilities of the stopping times $\tau_{j,k}$ defined in
Section~\ref{sec:monotonicity}. Let $w_{j,k}=w^{(n)}_{j,k}$ denote the
probability of a win if we use the stopping time $\tau_{j,k}$. If $j >
k$, by conditionning on the value of $X_k$,
\begin{align}
  &\begin{aligned}
    \label{eq:rec1}
    w_{j,k} &= \sum_{\nu = -1, 1, 0} P(X_j=\nu) P(\text{win using
      $\tau^+_{j}\wedge\tau^-_j$}|X_j=\nu)\\
    &= p' {(q')}^{n-k} + p w_{j,k+1} + (1-p-p') w_{j,k+1}\\
    &= p' {(q')}^{n-k} + q' w_{j,k+1}.
  \end{aligned}
\end{align}
If $j=k$, we have
\begin{equation}
  \label{eq:rec2}
  w_{j,j} = p'{(q')}^{n-j} + pq^{n-j} + \qtilde w_{j+1,j+1}.
\end{equation}
where $\qtilde = 1 - p - p'$. We know that $w_{n,n}=p+p'$ because the
probability of a win using the strategy ``stop on the last variable''
is simply the probability that $X_n$ is not equal to zero.

Writing out these recurrence equations leads us to the following formulas:
\begin{equation}
  \label{eq:w-expl1}
  w_{j,j} = \frac{p}{p'} (q^{n-j+1} - \qtilde^{n-j+1}) + \frac{p'}{p}
  ({(q')}^{n-j+1} - \qtilde^{n-j+1}),
\end{equation}
and for $j > k$, we have
\begin{equation}
  \label{eq:w-expl2}
    w_{j,k} =
  (j-k) p' {(q')}^{n-k} + {(q')}^{j-k} \biggl(\frac{p}{p'} (q^{n-j+1} - {\tilde q}^{n-j+1})
  + \frac{p'}{p} ({(q')}^{n-j+1} - {\tilde q}^{n-j+1})\biggr).
\end{equation}
If $j < k$ we use the same expression and exchange the role of $j$ and
$k$.

\subsection{Graphical illustrations}
\label{sec:graph-ill}

In the example displayed in figure~\ref{fig:plot-40}, we choose
$n=40$, $p=0.09$ and $p'=0.05$. We find that the optimal thresholds
are $(s,s')=(33, 28)$ and the optimal value is
$v=w^{(40)}_{33,28}=0.52987\dots$. We plotted the $w_{j,k}$'s for this
choice of $n$, $p$ and $p'$. We notice that the maximum is obtained
for $k_+=s$, $k_-=s'$.

\begin{figure}[h]
  \begin{center}
    \includegraphics[width=0.9\linewidth]{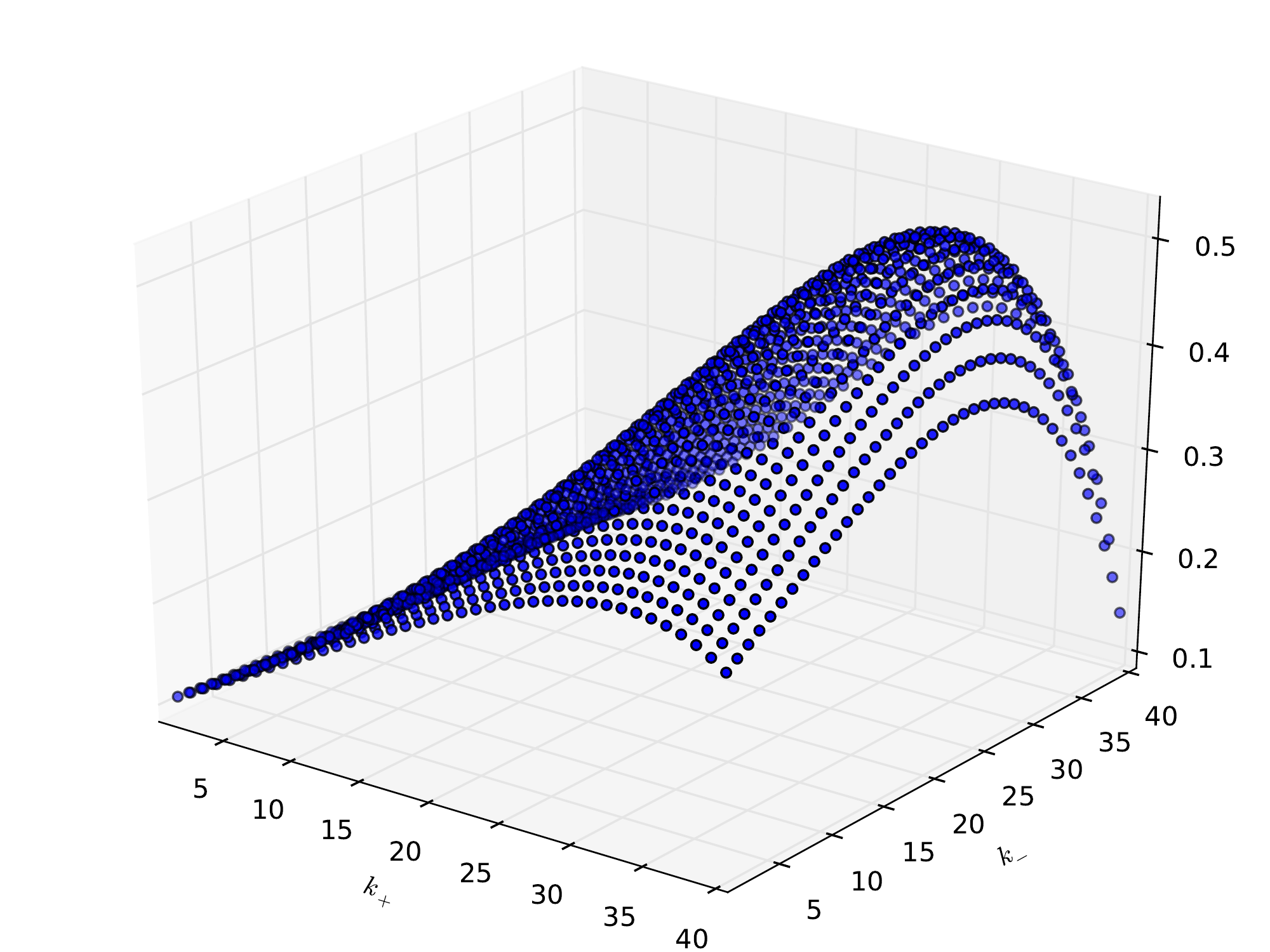}
  \end{center}
  \caption{The plot shows the value $w_{k_+,k_-}$, for a
    Problem~2-framework with $n=40$, $p=0.09$, $p'=0.05$.}
\label{fig:plot-40}
\end{figure}

\subsection{Solving algorithm}
\label{sec:recurs-based-algor}

We present a first algorithm (see figure~\ref{fig:algo-model-1}) based
on the properties shown in Section~\ref{sec:monotonicity}. The idea of
the algorithm is the following: if we start with the stopping region
$R_0 = \set{(n,1), (n,0), (n,-1)}$ and if we carefully add points to
$R_0$, we will be able to detect the indexes $s$ and $s'$, using the
unimodality property.

The recurrence equations \eqref{eq:rec1} and \eqref{eq:rec2} can be
used to speed up the computations of the $w_{j,k}$'s on lines 6, 12
and 23 in the algorithm.

\noindent%
\begin{figure}[h]
\begin{center}
\fbox{\begin{minipage}{0.9\linewidth}
  \begin{algorithmic}[1]
    \State Input : ($n,p,p'$)
    \State $k \gets n$.
    \State $\alpha \gets w_{k,k}$
    \State Label1
    \If{$k=1$} $s:=1$, $s':=1$.
    \EndIf
    \State $\beta \gets w_{k,k-1}$
    \If{$\beta<\alpha$}
    \State $s:=k$, $s':=k$, and stop.
    \Else
    \State $\alpha \gets \beta$
    \State $k \gets k-1$
    \State $\beta \gets w_{k,k}$
    \If{$\beta < \alpha$}
    \State $s:=k$ and go to Label2
    \Else
    \State $\alpha \gets \beta$
    \State go to Label1
    \EndIf
    \EndIf
    \State Label2
    \If{$k=1$} $s':=1$.
    \EndIf
    \State $\beta \gets w_{s,k-1}$
    \If{$\beta<\alpha$}
    \State $s':=k$ and stop
    \Else
    \State $\alpha \gets \beta$
    \State $k \gets k-1$
    \State go to Label2
    \EndIf
  \end{algorithmic}
\end{minipage}}
\end{center}
\caption{Pseudo-code for the algorithm giving the two
  optimal thresholds $s$ and $s'$, in model~1. The input of the
  algorithm is $(n,p,p')$.}
\label{fig:algo-model-1}
\end{figure}

\subsection{Optimality of the computed thresholds}
\label{sec:optim-comp-thresh}

The lemmas of the above section do not guarantee that the indexes
found in the algorithm are the true $s$ (line~13) and $s'$. We must
verify that if $s'<s$, then $w_{s-1,s-1} \le w_{s,s-1}$. This is true,
because the two strategies $\tau_{s-1,s-1}$ and $\tau_{s,s-1}$ behave
the same except in the case where $X_{s-1}=1$.  Therefore, to compare
$w_{s,s-1}$ against $w_{s-1,s-1}$, one can look at the same optimal
stopping problem but with a number of variables being equal to
$n-s$. In this case we see that the optimal strategy has success
probability $w_{2,1}\ge w_{1,1}$ which is equal to the value
$w_{s,s-1}\ge w_{s-1,s-1}$ of the initial problem with $n$ variables.


\subsection{Formula-based algorithm}
\label{sec:remark}

Here we use the explicit formula of the $w_{j,k}$'s given by
\eqref{eq:w-expl2}. Note that this equation remains correct even if
$j=k$. We also know that $s\ge s'$ because $p>p'$. As a result we know
that we will not need to look at the value of $w_{j,k}$ for
$j<k$. Therefore this is the only formula we need.

Another way of finding $s$ is by putting the second
``$(-1)$-threshold'' to 1, and comparing $w_{n,1}$ with $w_{n-1,1}$,
then $w_{n-1,1}$ with $w_{n-2,1}$, etc. By an argument to the one used
in Section~\ref{sec:optim-comp-thresh}, we can show that this
determines correctly $s$.

We will use this and the unimodality of the function $j\mapsto
w_{j,1}$ to find the index $s$. With a bisection algorithm this can be
done in $O(\ln (n))$ time. When this first index is found, we look for
the $k$ maximizing $k\mapsto w_{s,k}$.

An important feature of this method is that, for any $k \le s'$, the
mode of the function $j\mapsto w_{j,k}$ occurs at index $s$.

%
%

In figure~\ref{modes}, we show information about the shape of the
graph of the function $(j,k)\mapsto w_{j,k}$. Intuitively, if $k_-=n$,
it is not likely that we win by selecting a ``$-1$''. Therefore, we
must concentrate on selecting a ``$1$'' only. The problem then becomes
close to problem that the odds-algorithm can solve optimally.

\begin{figure}
  \begin{center}
    \includegraphics[width=0.9\linewidth]{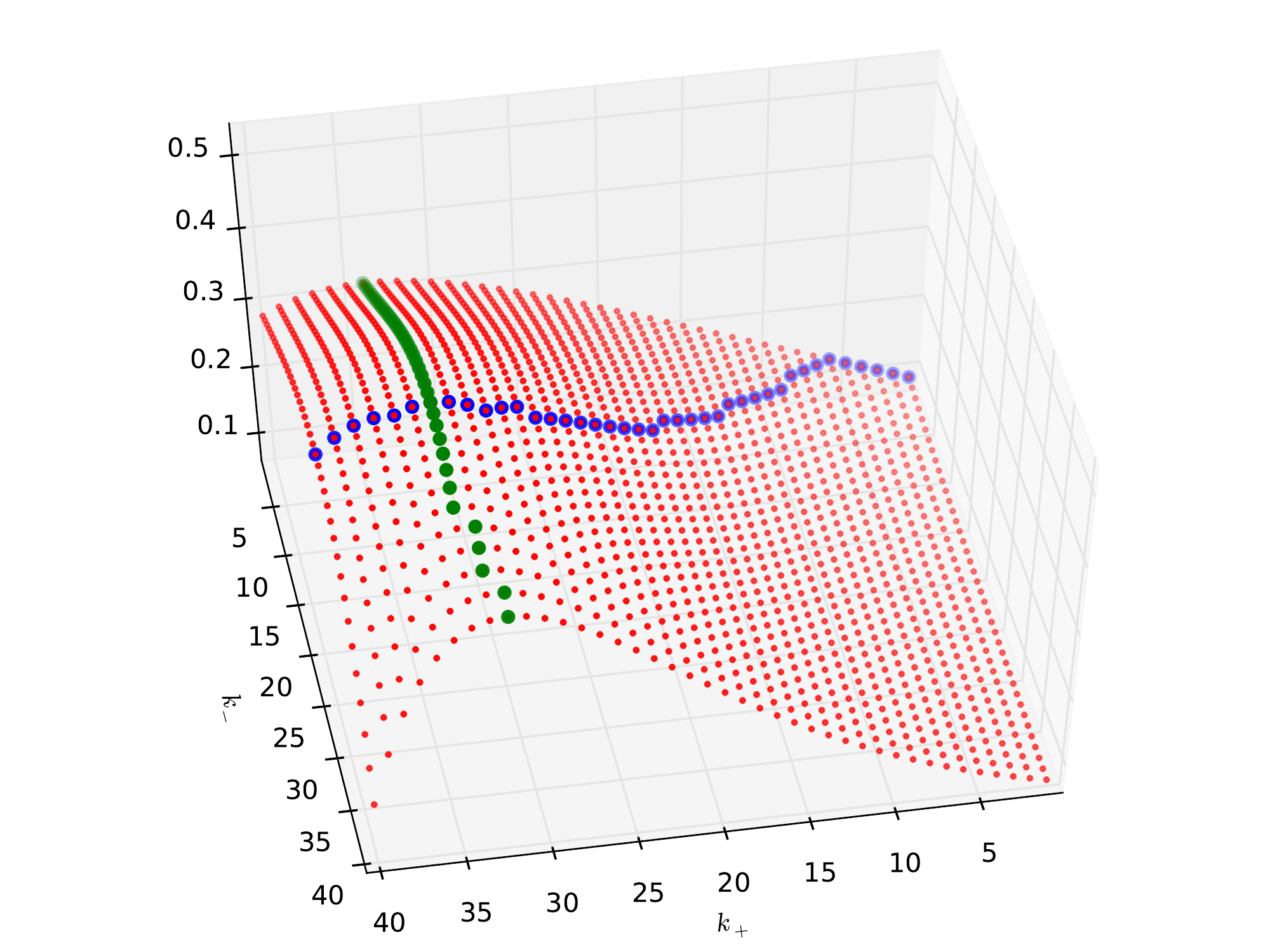}
  \end{center}
  \caption{In this example (problem~2) we chose
    $(n,p,p')=(40,0.1,0.05)$. The green dots show the modes, for
    $k_-=1,2,\dotsc,n$, of the functions $j\mapsto w_{j,k_-}$. The
    blue dots show the modes, for $k_+=1,2,\dotsc,n$, of the functions
    $k\mapsto w_{k_+,k}$.}
  \label{modes}
\end{figure}


\section{Solution of Problem 3}
\label{sec:problem-3}

In this section we suppose that the variables $X_1,X_2,\dots,X_n$ are
independent but have different distributions, that is, there are known
parameters $p_1,p_2,\dotsc,p_n$ such that for all $1\le i \le n$,
\begin{displaymath}
  P(X_i=1)=P(X_i=-1)=p_i \quad\text{and}\quad P(X_i=0)=1-2p_i.
\end{displaymath}
We put as usual $q_j=1-p_j$.

\subsection{Monotonicity}
\label{sec:monotonicity-2}

The proof of monotonicity of Section~\ref{sec:problem-2} can be
adapted and using parameters $p_k$'s instead of one single parameter
$p$ does not lengthen the proof.

By the same result about monotone stpping problems used in
Section~\ref{sec:monotonicity-1}, we know that the 1-stage look-ahead
rule is optimal (see Ferguson, 2008). Define
\begin{align*}
  V_k &= P\setp{\text{we win by selecting the opportunity at stage
      $k$}|X_k\in\set{-1,1}} \\
\intertext{and}
  W_k &= w_{k+1,k+1}
\end{align*}
where the $w_{j,k}$ have the same meaning as in the previous model.
Obviously $V_k=\prod_{j=k+1}^n q_j$, and
\begin{align*}
  W_j &= \sum_{m=0}^{n-k-1}
    \left[ \prod_{j=k+1}^{k+m}(1-2p_j)\right] (2p_{k+m+1})
    \left[ \prod_{j=k+m+2}^{n}(1-p_j)\right] \\
  &= 2 \sum_{m=0}^{n-k-1}
    \left[ \prod_{j=k+1}^{k+m}(q_j-p_j)\right] p_{k+m+1}
    \left[ \prod_{j=k+m+2}^{n}q_j\right].
\end{align*}
Therefore,
\begin{equation}
  \frac{W_k}{V_k} = 2 \sum_{m=0}^{n-k-1}
  \left[ \prod_{j=k+1}^{k+m} (1-r_j) r_{k+m+1}\right],
\end{equation}
where $r_j=p_j/q_j$. It is not easy to extract the index $k^\star$
such that $W_k/V_k \ge 1$ for $k<k^\star$ and $W_k/V_k < 1$ for $k\ge
k^\star$. Nevertheless, we can provide an algorithm with linear
complexity that will compute $k^\star$.

\subsection{Computation of the stopping threshold}
\label{sec:stopping-index}

We must start by looking at the end of the sequence. If
$W_{n-1}/V_{n-1}\ge 1$ we must stop on the last variable. If the ratio
is smaller, then look at the ratio at time $n-2$, and so on. By
starting with the end of the sequence we will only compute the values
used in the expression of $W_{k^\star}/V_{k^\star}$.

Let $\Lambda_k=W_k/(2V_k)$, $k=1,2,\dotsc,n-1$. We see that
$\Lambda_{n-1}=r_n$. And we have the following recurrence equation
\begin{equation}
  \Lambda_k = r_{k+1} + (1-r_{k+1}) \Lambda_{k+1}, \qquad 1\le k\le n-2.
\end{equation}
Here is, in pseudocode, the algorithm that computes the time index
$k^\star$.
\medskip
\begin{list}{--}{}
\item \textbf{Step 0} Set $k=n-1$. Compute $r_n$. We have
  $\Lambda_{n-1}=r_n$.
\item \textbf{Step 1} $k\leftarrow k-1$. Compute $r_{k+1}$, $1-r_{k+1}$ and
  $\Lambda_k = r_{k+1} + (1-r_{k+1})\Lambda_{k+1}$.
\item \textbf{Step 2} If $2\Lambda_k \ge 1$ then \textbf{stop}, and
  $k^\star=k+1$. If $2\Lambda_k<1$, go to step 1.
\end{list}\medskip

\noindent If $\Lambda_1 < 1$, set $k^\star=1$.

We see that the complexity of the algorithm is linear, and that the
stopping time (1-sla) defined by
\begin{equation}
  \label{eq:5}
  n \wedge \inf\set{k>k^\star: X_k\in\set{-1,1}}
\end{equation}
is optimal.

\section{Particular case}
\label{sec:particular-case}

If $p=p'$ in model~1 or $p_k=p_1=:p$ for all $k$ in Problem~3, then we
have the Weber's original problem. In this case we use the method
described in model~2 to obtain the optimal strategy. Since
$V_k=q^{n-k}$ and $W_k=2((1-p)^{n-k} - (1-2p)^{n-k})$ we can describe
the optimal stopping rule as: stop on the first variable equal to $1$
or equal to $-1$ after time $s$ ($s$ included), where $s$ is the
smallest integer of $\EN$ such that $W_k / V_k<1$. If there is no such
integer, put $s=1$.

This case is also obtained when we look at Problem~2 with $p=p'$.

\section*{One more generalization}
\label{sec:one-more-gener}

The two models solved in Section~\ref{sec:monotonicity} and
Section~\ref{sec:problem-3} have a common generalization. This model
deals with variables $X_1,X_2,\dots,X_n$, for a fixed $n\ge 1$, who
are independent, and where $P(X_k=1)\neq P(X_k=-1)$.

We believe that the approach described by the solving algorithm of
problem~2 can be used to solve this new generalization. The algorithm
would require three test moves instead of two. And since the
assumptions of problem~2 seem to be general enough to be useful in
applications, we will leave out these new technicalities.

%
%


\section{Continuous-time Approximation}
\label{sec:cont-time-appr}

Approximating the model cannot give us optimal answers. But using this
approach we are able to give lower bounds for the optimal success
probability of the real discrete time model (problem~1). The model is
as follows.

Let $X_1,X_2,\dotsc,X_n$ be the $n$ random variables of Weber's
original problem, where $P(X_i=1)=P(X_i=-1)=p<1/2$. Now let $U_i$,
$i=1,2,\dotsc,n$ be $n$ independent random variables uniformly
distribution over the interval $[0,1]$. Then define $T_i=U_{(i)}$,
that is, $T_i$ is the $i$th order statistic of the $U_i$'s. We look at
$T_i$ as the arrival time of variable $X_i$.

Suppose that now we are not able to count time discretely and observe
one variable every second, but instead we are able to scan the time
interval $[0,1]$ from left to right and we detect a variable at each
of the arrival times $T_i$. The number $n$ of variables is known.

We restrict our choice of strategies to fixed time threshold
strategies: before observing the variables, we choose an $x\in[0, 1]$
after which we decide to select the first non zero variable
appearing. Following Bruss (2000) we call such strategies
$x$-strategies. We will now write the expression of $p_n(x)$, the
success probability of succeeding in finding the last non zero
variable of its kind (as in Weber's problem), now in terms of an
$x$-strategy. Then we maximize this probability over all $x\in[0, 1]$.

Let $N_x$ be the exact number of variables arriving after time $x$. It
is a random variable, let $N_x^-$ the exact number of $-1$'s arriving
after $x$. Note that if we select a ``$+1$'', then if there are also
``$-1$'' variables after time $x$, we are certain that they come after
this ``$+1$'' variable as, according to the $x$-strategy, we want to
select the first non zero variable after time $x$.

In what follows, the probabilities are all taken
under the condition $N_0=n$. We have
\begin{align*}
  p_n(x) &=
  \sumn P(N_x = k) P(\text{success of the $x$-strategy} | N_x =
  k)
  \\ &=
  \sumn \probaK \left( \sum_{m=0}^{k-1}
    2P(\text{success, we select a ``$+1$'', $N_x^-=m$}|N_x=k)\right) \\ &=
  2\sumn \probaK \left( \sum_{m=0}^{k-1} \binom{k}{m+1} p^{m+1}
    \qtilde^{k-(m+1)} \right) \\ &=
  2\sumn \probaK \left( \sum_{m=1}^{k} \binom{k}{m} p^{m} \qtilde^{k-m}
  \right)
\end{align*}
and we use Newton's formula several times to
finish this computation:
\begin{align*}
  p_n(x)&=
  2\sumn \probaK \left( (p+\qtilde)^k - \qtilde^k \right) \\ &=
  2\sumn \probaK \left( q^k - \qtilde^k \right) \\ &=
  2\sumn \binom nk ((1-x)q)^k x^{n-k} - \sumn \binom nk
  ((1-x)\qtilde)^k x^{n-k} \\ &=
  2 ((q+px)^n - (\qtilde + 2px)^n)
\end{align*}
Maximizing the function $p_n(x)$, we obtain
\begin{equation*}
  \frac{\mathrm dp_n}{\mathrm dx}(x^\star_n) = 0 \iff x^\star_n =
  \frac 1p \frac{q-\qtilde \beta_n}{2\beta_n - 1}
\end{equation*}
where $\beta_n = \sqrt[n-1]{2}$. The value of this maximum is
\begin{align*}
  p_n(x^\star_n) &=
  2\left(
    \left( q + \frac{q-\qtilde\beta_n}{2\beta_n-1} \right)^n -
    \left( \qtilde + 2\frac{q-\qtilde\beta_n}{2\beta_n-1} \right)^n
  \right),\\
  \intertext{as we can see by the expression of $p_n(x)$.
    Using $q=\qtilde + p$ and $q+p=1$, this can be written }
  p_n(x^\star_n)&=
  2\left(
    \left(q + \frac{q(1-2\beta_n) + \beta_n}{2\beta_n-1} \right)^n -
    \left(\qtilde + \frac{\qtilde(1-2\beta_n) + 1}{2\beta_n-1} \right)^n
  \right) \\ &=
  2 \left(
    \left(\frac{\beta_n}{2\beta_n-1}\right)^n -
    \left(\frac{1}{2\beta_n-1}\right)^n
  \right) \\ &=
  2 \frac{\beta_n^{\,\raisebox{2pt}{\scriptsize$n$}} - 1}{(2\beta_n -
    1)^n} \\ &=
  2(2\beta_n - 1)^{1-n}
\end{align*}
We can show that, $p_n(x^\star_n)\searrow \frac12$ as $n\to +\infty$.

We notice interesting features. First, as intuition might tell, for a
fixed $n$, if $p$ is too small, then the success probability goes down
and can be arbitrarily small ($\lim_{p\to0} p_n(0)=0$). The second
observation is more interesting. The parameter $p$ does not appear in
the expression of the optimal success probability for a fixed $n$ and
$p$.

The conclusion of this continuous approximation is that, when $p\ge
\frac{\beta_n-1}{2\beta_n-1}$, it is always possible to achieve a
success probability that is at least equal to $1/2$. And $1/2$ is also
a lower bound for the optimal success probability of Problem~1.


\nocite{*}

\bibliographystyle{plain}

\end{document}